\documentclass[11pt,reqno]{amsart}

\usepackage{amsmath,amssymb,amsthm,mathtools,mathrsfs}
\usepackage{enumitem}
\usepackage{microtype}
\usepackage[colorlinks=true,
            linkcolor=blue,
            citecolor=blue,
            urlcolor=blue]{hyperref}

\numberwithin{equation}{section}

\newtheorem{theorem}{Theorem}[section]

\newcommand{\N}{\mathbb{N}}
\newcommand{\one}{\mathbf{1}}
\newcommand{\dd}{\,d}
\newcommand{\vol}{\operatorname{Vol}}

\title[Synchronized sign-changing Liouville systems]
{Synchronized Solutions for Liouville Systems\\
with Sign-Changing Coefficients}

\author{Woongbae Park}
\address{Department of Mathematics, University of Rochester,
Rochester, NY 14627, USA}
\email{wpark14@ur.rochester.edu}

\author{Lei Zhang}
\address{Department of Mathematics, University of Florida,
Gainesville, FL 32611-8105, USA}
\email{leizhang@ufl.edu}
\thanks{L.~Zhang is partially supported by Simons Foundation Grant
SFI-MPS-TSM-00013752.}

\subjclass[2020]{35J47, 35J60, 58J20}
\keywords{Liouville system, sign-changing coefficient, singular source,
synchronized solution, variational method}

\begin{document}

\begin{abstract}
We consider singular Liouville systems on closed Riemann surfaces with a
common sign-changing coefficient.  Direct continuation from a positive
coefficient is obstructed by the unavoidable appearance of degenerate zeroes
along such a homotopy.  We instead identify an intrinsic ray in the parameter
space on which the system admits synchronized solutions and reduces exactly to
a scalar sign-changing mean-field equation.  Combining this reduction with the
variational existence theory of De Marchis--L\'opez-Soriano--Ruiz and the nodal
singularity compactness theorem of Wu, we obtain existence for every
noncritical parameter on this ray.  Positive singular sources are allowed on
the nodal curve.  In particular, when the surface has nonpositive Euler
characteristic and every negative component of the coefficient is a disk, the
topological hypothesis required by the scalar min--max construction is
automatic.
\end{abstract}

\maketitle

\section{Introduction}

Let $(M,g)$ be a connected closed Riemann surface normalized by
$\vol_g(M)=1$.  We study the system
\begin{equation}\label{eq:system-intro}
 \Delta_g u_i+
 \sum_{j=1}^n a_{ij}\rho_j
 \left(
   \frac{h e^{u_j}}{\displaystyle\int_M h e^{u_j}\dd V_g}-1
 \right)
 =\sum_{\ell=1}^L4\pi\alpha_\ell(\delta_{p_\ell}-1),
 \qquad i=1,\ldots,n,
\end{equation}
where $A=(a_{ij})$ is an invertible coupling matrix,
$\rho_j>0$, and the singular strengths satisfy $\alpha_\ell\geq0$.
The coefficient $h$ is allowed to change sign, and solutions are sought in the
natural branch
\begin{equation}\label{eq:positive-denominator}
 \int_M h e^{u_i}\dd V_g>0,
 \qquad i=1,\ldots,n.
\end{equation}
The equation is invariant under the addition of an arbitrary constant to each
component.  Throughout, multiplicity is understood modulo these additive
constants, or equivalently after imposing one normalization on each component.

For positive coefficients, compactness, degree theory, and blow-up profiles of
Liouville systems have been studied extensively; see, for example,
\cite{LinZhangProfile,LinZhangDegree}.  Sign-changing coefficients introduce a
different obstruction.  The normalization in \eqref{eq:positive-denominator}
controls a signed rather than a positive measure, and the nodal set
\[
 \Gamma=\{x\in M:h(x)=0\}
\]
becomes a possible transition region.  Even in the scalar equation, the
compactness analysis requires integral estimates on the negative side and a
moving-plane comparison across $\Gamma$.

For the scalar singular mean-field equation, De Marchis,
L\'opez-Soriano, and Ruiz \cite{DMLR} established compactness, existence, and
generic multiplicity results under the regular-nodal-set hypothesis and the
condition that prescribed singularities avoid the nodal curve.  Wu
\cite{Wu} subsequently proved that the latter restriction can be removed when
the singular strengths are nonnegative: positive singular sources may lie
directly on $\Gamma$.  These existence results are intrinsic to the
sign-changing problem.  They are obtained from the topology of low energy
sublevels and do not deform the coefficient to a positive function.

The purpose of this paper is to transfer that intrinsic mechanism to a natural
and nontrivial class of Liouville systems.  Let
\[
 A^{-1}=(a^{ij}),
 \qquad
 q_i=\sum_{j=1}^n a^{ij},
 \qquad
 q=A^{-1}\one.
\]
If $q_i>0$ and
\begin{equation}\label{eq:synchronized-ray-intro}
 \rho_i=\lambda q_i,
\end{equation}
then
\[
 A\rho=\lambda\one.
\]
Consequently, the diagonal ansatz
$u_1=\cdots=u_n$ reduces every equation of the system to the same scalar
equation with parameter $\lambda$.  The resulting ray
\[
 \{\lambda A^{-1}\one:\lambda>0\}
\]
is determined entirely by the coupling matrix and will be called the
\emph{synchronized ray}.

Our main theorem gives a solution of \eqref{eq:system-intro} at every
point of this ray which is with scalar parameter $\lambda$ outside the critical parameter space $\Lambda_h$.
The topological assumption on the positive
region needed in \cite{DMLR} is automatic under the geometric hypotheses used
here: $\chi(M)\leq0$ and every component of $\{h<0\}$ is a disk.  The theorem
also permits positive singular sources on the nodal curve by invoking the
compactness result of \cite{Wu}.

The restriction to the synchronized ray is substantive.  For symmetric coupling matrices, the formal system
functional contains the quadratic form of $A^{-1}$.  Under the usual
strong-interaction hypothesis this quadratic form is indefinite, so the scalar
barycenter description of low sublevels does not directly extend to arbitrary
parameter vectors.  We therefore make no claim here for general $\rho$.
%and do
%not use a positive-to-sign-changing coefficient homotopy.

\section{Setting and scalar input}

We impose the following assumptions on the coefficient:
\begin{equation}\label{eq:h-hyp}
 h\in C^{2,\vartheta}(M),\qquad
 h\ \text{changes sign},\qquad
 |\nabla_g h(x)|>0\quad\text{for every }x\in\Gamma,
\end{equation}
for some $\vartheta\in(0,1)$.  Write
\[
 \Gamma^+=\{h>0\},\qquad
 \Gamma^-=\{h<0\},\qquad
 \Gamma=\{h=0\}.
\]
We assume that every connected component of $\Gamma^-$ is diffeomorphic to a
disk.  The regularity condition in \eqref{eq:h-hyp} implies that $\Gamma$ is a
finite union of embedded $C^{2,\vartheta}$ closed curves.

The prescribed points $p_1,\ldots,p_L$ are distinct and
\begin{equation}\label{eq:source-hyp}
 \alpha_\ell\geq0,
 \quad
 \text{ and if } p_\ell\in\Gamma , \, \alpha_\ell>0.
\end{equation}
We set
\begin{equation}\label{eq:Iplus}
 \mathcal I_+(h)=\{\ell\in\{1,\ldots,L\}:h(p_\ell)>0\}
\end{equation}
and define the scalar critical set
\begin{equation}\label{eq:Lambda}
 \Lambda_h=
 \left\{
  8\pi m+8\pi\sum_{\ell\in B}(1+\alpha_\ell):
  m\in\N\cup\{0\},\ B\subseteq\mathcal I_+(h)
 \right\}\setminus\{0\}.
\end{equation}
A marked point with $\alpha_\ell=0$ can be deleted without changing the
equation or $\Lambda_h$: its contribution $8\pi$ in \eqref{eq:Lambda}
can be absorbed into $8\pi m$.  Thus, when applying scalar existence
theorems stated for positive singular strengths, all remaining strengths
may be taken to be strictly positive.

Only singular points in $\Gamma^+$ occur in \eqref{eq:Lambda}.  Blow-up at a
singular point in $\Gamma^-$ is excluded by the negative-region estimate, and
blow-up at a positive singular source on $\Gamma$ is excluded by the nodal
moving-plane argument of \cite{Wu}.

We recall briefly how the scalar equation fits the variational framework.  Let
$G(x,p)$ be normalized by
\begin{equation}\label{eq:green}
 \Delta_gG(x,p)=-\delta_p+1,
 \qquad
 \int_MG(x,p)\dd V_g(x)=0.
\end{equation}
For a scalar solution $u$, set
\begin{equation}\label{eq:singular-removal}
 U=u+4\pi\sum_{\ell=1}^L\alpha_\ell G(x,p_\ell),
 \qquad
 H=h\exp\left(-4\pi\sum_{\ell=1}^L
                  \alpha_\ell G(x,p_\ell)\right).
\end{equation}
In a local isothermal coordinate $z$ centered at $p_\ell$,
\begin{equation}\label{eq:local-weight}
 H(z)=h(z)|z|^{2\alpha_\ell}e^{\psi_\ell(z)},
\end{equation}
where $\psi_\ell$ is smooth.  In particular, $H$ is bounded locally because
$\alpha_\ell\geq0$.  The regularized variable $U$ solves
\begin{equation}\label{eq:regularized-scalar}
 \Delta_g U+\lambda\left(
 \frac{He^U}{\displaystyle\int_MHe^U\dd V_g}-1\right)=0.
\end{equation}
The positive-integral branch admits the unique normalized representative
$v=U-\log\int_MHe^U\dd V_g$, which solves
\begin{equation}\label{eq:normalized-scalar}
 \Delta_g v+\lambda(He^v-1)=0,
 \qquad
 \int_MHe^v\dd V_g=1.
\end{equation}
For the variational formulation we instead use the mean-zero domain
\begin{equation}\label{eq:scalar-domain}
 \mathcal X_H=\left\{w\in H^1(M):
 \int_Mw\dd V_g=0,\quad\int_MHe^w\dd V_g>0\right\}
\end{equation}
and the functional
\begin{equation}\label{eq:scalar-functional}
 \mathcal J_\lambda(w)
 =\frac12\int_M|\nabla_gw|^2\dd V_g
  -\lambda\log\int_MHe^w\dd V_g,
 \qquad w\in\mathcal X_H.
\end{equation}
The domain is nonempty: one can choose a smooth function whose exponential
is sufficiently large on a ball compactly contained in
$\Gamma^+\setminus\{p_1,\ldots,p_L\}$ and then subtract its mean.
A critical point $w$ solves \eqref{eq:regularized-scalar} with $U=w$.
It yields a solution of \eqref{eq:normalized-scalar} only after the shift
\begin{equation}\label{eq:normalization-shift}
 v=w-\log\int_MHe^w\dd V_g.
\end{equation}
Thus the mean-zero and positive-integral normalizations are used separately.
The corresponding singular solution is understood distributionally, with
$u+4\pi\sum_\ell\alpha_\ell G(\cdot,p_\ell)\in H^1(M)$.

For $0<\lambda<8\pi$, the Moser--Trudinger inequality makes
\eqref{eq:scalar-functional} coercive on its natural domain and gives a
minimizer.  If
\begin{equation}\label{eq:supercritical-interval}
 8\pi k<\lambda<8\pi(k+1),\qquad k\geq1,
%\lambda \in (8\pi n_k, 8\pi n_{k+1})
\end{equation}
%where $\Lambda_h = \{n_k : k = 1,2, \ldots\}$,
the functional is unbounded below.  The main variational result of
\cite{DMLR} gives a critical point when either $\Gamma^+$ has more than $k$
components or one component of $\Gamma^+$ is not simply connected, provided
$\lambda\notin\Lambda_h$; see \cite[Theorem~2.2]{DMLR}.
Wu's nodal a priori estimate \cite{Wu} extends the scalar compactness input
to positive singular sources on $\Gamma$.  Combined with the variational
construction of \cite{DMLR}, this gives the same conclusion under
\eqref{eq:source-hyp}.  The nodal estimate is applied to the original
coefficient $h$ together with its singular factors, not by assuming that the
regularized weight $H$ has a nonvanishing gradient on its zero set.

\section{The main result}

We assume that $A=(a_{ij})$ satisfies
\begin{enumerate}[label=\textnormal{(A\arabic*)},leftmargin=2.6em]
 \item\label{A1} $A$ is symmetric, nonnegative, irreducible, and invertible;
 \item\label{A2} if $A^{-1}=(a^{ij})$, then
 \[
   a^{ii}\leq0,
   \qquad a^{ij}\geq0\quad(i\neq j),
   \qquad \sum_{j=1}^na^{ij}\geq0;
 \]
 \item\label{A3} the row sums of $A^{-1}$ are strictly positive:
 \[
   q_i:=\sum_{j=1}^na^{ij}>0,
   \qquad i=1,\ldots,n.
 \]
\end{enumerate}
The strictness in \ref{A3} is exactly what is needed to place the synchronized
ray inside $(0,\infty)^n$.

\begin{theorem}[Intrinsic synchronized existence]\label{thm:main}
Let $(M,g)$ be a connected closed Riemann surface with $\vol_g(M)=1$ and
$\chi(M)\leq0$.  Suppose that $h$ satisfies \eqref{eq:h-hyp}, every connected
component of $\Gamma^-$ is diffeomorphic to a disk, and the singular data
satisfy \eqref{eq:source-hyp}.  Let $A$ satisfy \ref{A1}--\ref{A3}, put
\begin{equation}\label{eq:q-def}
 q=A^{-1}\one,
\end{equation}
and, for $\lambda>0$, define
\begin{equation}\label{eq:rho-def}
 \rho=\lambda q.
\end{equation}
Then, for every
\begin{equation}\label{eq:lambda-noncritical}
 \lambda\in(0,\infty)\setminus\Lambda_h,
\end{equation}
system \eqref{eq:system-intro} has a synchronized solution
\begin{equation}\label{eq:synchronized-solution}
 u_1=\cdots=u_n=u,
 \qquad
 \int_Mhe^u\dd V_g>0.
\end{equation}
In particular, positive singular sources are allowed on the nodal curve
$\Gamma$.  Moreover, the diagonal map induces an injection from scalar
solutions modulo additive constants into system solutions modulo independent
additive constants.  Consequently, any scalar multiplicity lower bound
applicable to the given data is also a lower bound for synchronized system
solutions.
\end{theorem}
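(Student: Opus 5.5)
The plan is to reduce the system to the scalar equation on the synchronized ray and then verify that the scalar existence theory of Section~2 applies.

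\emph{Algebraic reduction.} Since $\rho=\lambda q$ with $q=A^{-1}\one$, we have $A\rho=\lambda\one$, that is, $\sum_j a_{ij}\rho_j=\lambda$ for every $i$. If we substitute $u_1=\cdots=u_n=u$ into \eqref{eq:system-intro}, the bracket does not depend on $j$. Each equation therefore becomes
\[
\Delta_g u+\lambda\Bigl(\frac{he^u}{\int_M he^u\dd V_g}-1\Bigr)=\sum_{\ell}4\pi\alpha_\ell(\delta_{p_\ell}-1).
\]
This is the scalar singular mean-field equation, and the positivity condition \eqref{eq:positive-denominator} reduces to $\int_M he^u\dd V_g>0$. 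By \ref{A3}, every $\rho_i=\lambda q_i$ is positive, so the parameter vector is admissible. Via \eqref{eq:singular-removal}, this scalar equation is equivalent to \eqref{eq:regularized-scalar}. It therefore suffices to find a critical point of $\mathcal J_\lambda$ on $\mathcal X_H$ for every $\lambda\notin\Lambda_h$.

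\emph{Scalar existence.} If $0<\lambda<8\pi$, minimization via Moser--Trudinger gives the critical point. If $\lambda>8\pi$ with $\lambda\notin\Lambda_h$, then $\lambda$ lies in some interval $(8\pi k,8\pi(k+1))$ with $k\ge1$, because the multiples $8\pi m$ belong to $\Lambda_h$. We then invoke \cite[Theorem~2.2]{DMLR}, combined with Wu's compactness for sources on $\Gamma$, as recalled in Section~2. Marked points with $\alpha_\ell=0$ are first deleted, so all remaining strengths are positive. For this step we must verify the topological hypothesis that some component of $\Gamma^+$ is not simply connected.

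\emph{Topological step (main obstacle).} Suppose every component of $\Gamma^+$ were simply connected, hence (being an open subsurface bounded by smooth curves) a disk. Every component of $\Gamma^-$ is also a disk by hypothesis. Cutting $M$ along the finitely many curves of $\Gamma$ would then decompose it into closed disks glued along circles. To compute $\chi(M)$, note that each boundary circle has $\chi=0$, so $\chi(M)=\chi(\overline{\Gamma^+})+\chi(\overline{\Gamma^-})$. This sum equals the number of components, which is positive since $h$ changes sign, and this contradicts $\chi(M)\le0$. The care needed here is to justify additivity of $\chi$ for the decomposition (Mayer--Vietoris, with the circles contributing zero) and that each closure of a component is a compact surface whose boundary is a union of components of $\Gamma$. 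Both follow from $|\nabla h|>0$ on $\Gamma$. Hence some component of $\Gamma^+$ is not simply connected, and the DMLR min--max applies for all $k$.

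\emph{Injectivity.} If two scalar solutions $u,\tilde u$ have diagonal images that agree modulo independent constants, then $u-\tilde u=c_1=\cdots=c_n$ is constant. So $u$ and $\tilde u$ agree modulo constants, and the diagonal map is injective. Consequently, distinct scalar solutions yield distinct synchronized system solutions, and scalar multiplicity bounds transfer.
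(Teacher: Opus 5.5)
Your proposal is correct, and its architecture is the paper's. You reduce to the scalar equation via $A\rho=\lambda\one$, minimize for $\lambda<8\pi$, apply \cite[Theorem~2.2]{DMLR} together with Wu's nodal estimate for $\lambda\in(8\pi k,8\pi(k+1))\setminus\Lambda_h$, and finish with injectivity of the diagonal map. The only real difference is how you verify the topological hypothesis.

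\textbf{The paper's route.} It argues directly. Each closure $\overline{\Gamma^-_j}$ is a closed disk with one boundary circle, and these closures are pairwise disjoint. So $\Gamma^+\cup\Gamma$ is $M$ with finitely many disjoint open disks removed. Hence $\Gamma^+$ is connected and has the genus $\geq1$ of $M$.

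\textbf{Your route.} You argue by contradiction using additivity of $\chi$. If every positive component were also a disk, $M$ would be a union of closed disks glued along circles, and $\chi(M)$ would equal the positive number of pieces; in fact this forces $M\cong S^2$.

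\textbf{What each buys.} Your argument produces only one non-simply-connected component of $\Gamma^+$. That is exactly the alternative of \cite[Theorem~2.2]{DMLR} you need, so nothing is lost. It does require a Mayer--Vietoris justification using collar neighbourhoods of $\Gamma$. The paper's argument instead relies on the fact that removing disjoint open disks preserves connectedness and genus. In return it gives the stronger structural statement that $\Gamma^+$ is connected with the genus of $M$.

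\textbf{Shared input.} Both arguments rest on the same local fact: each curve of $\Gamma$ separates a positive side from a negative side. Both also use that a compact surface with boundary whose interior is simply connected is a closed disk with one boundary circle. You use this second fact implicitly when you pass from a simply connected component of $\Gamma^+$ to a closed disk in the decomposition. It is worth stating explicitly, as the paper does for the negative components.
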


The existence and lifting conclusions in fact require only that $A$ be
invertible and $A^{-1}\one\in(0,\infty)^n$.  Assumptions
\ref{A1}--\ref{A3} place the result in the customary strongly interacting
class and are retained for the discussion in Section~\ref{sec:scope}.

The multiplicity assertion is conditional on the hypotheses of the scalar
result being used.  In particular, generic bounds from \cite{DMLR,Wu}
transfer on the parameter spaces where the corresponding scalar genericity
statement is available.  We do not assert genericity within a separately
constrained class requiring specified points to remain on $\Gamma$.

Several features of Theorem~\ref{thm:main} are worth emphasizing.  First, the
coefficient remains fixed throughout the proof; no deformation from $h$ to a
positive function is used. 
In fact, this deformation yields unavoidable degenerate zeros which make it hard to do continuation method.
Second, the conclusion holds in every
supercritical interval, not only below the first critical value.  Third, the
critical set depends only on the positive region and on singular points lying
there.  The singular sources in $\Gamma^-$ and on $\Gamma$ do not contribute
to \eqref{eq:Lambda}.

\section{Proof of the main theorem}

\begin{proof}[Proof of Theorem~\ref{thm:main}]
The proof has three steps.

\smallskip
\noindent
\emph{Step 1: topology of the positive region.}
Because $M$ is a closed Riemann surface and $\chi(M)\leq0$, its genus is at
least one.  By \eqref{eq:h-hyp}, each component of $\Gamma$ is an
embedded $C^{2,\vartheta}$ circle, with $h$ of opposite signs on its two
sides.  Consequently, the closure of each negative component is a compact
embedded surface with boundary.  Since its interior is a disk, the
classification of compact surfaces with boundary implies that this closure
is a closed disk with one boundary component.  The same local sign
description shows that distinct negative closures are disjoint.

Removing finitely many pairwise disjoint open disks from a connected closed
surface leaves a connected compact surface with boundary and with the same
genus as the original surface.  Its interior is precisely $\Gamma^+$.  It
follows that $\Gamma^+$ is connected and is not simply connected.  In
particular, for every integer $k\geq1$, the topological hypothesis used in the
scalar supercritical existence theorem of \cite{DMLR} is satisfied.

\smallskip
\noindent
\emph{Step 2: solution of the scalar problem.}
Consider
\begin{equation}\label{eq:scalar-original}
 \Delta_g u+\lambda
 \left(
   \frac{he^u}{\displaystyle\int_Mhe^u\dd V_g}-1
 \right)
 =\sum_{\ell=1}^L4\pi\alpha_\ell(\delta_{p_\ell}-1).
\end{equation}
If $0<\lambda<8\pi$, boundedness of $H$ and the Moser--Trudinger
inequality give, for $w\in\mathcal X_H$,
\begin{equation}\label{eq:coercivity}
 \mathcal J_\lambda(w)\geq
 \left(\frac12-\frac{\lambda}{16\pi}\right)
 \int_M|\nabla_gw|^2\dd V_g-C.
\end{equation}
Let $(w_m)$ be a minimizing sequence.  By \eqref{eq:coercivity} and
Poincar\'e's inequality it is bounded in $H^1(M)$.  After passing to a
subsequence, it converges weakly in $H^1(M)$ and almost everywhere to a
mean-zero function $w$.  Moser--Trudinger also bounds $e^{w_m}$ in
$L^p(M)$ for every fixed finite $p>1$, so uniform integrability yields
$e^{w_m}\to e^w$ in $L^1(M)$.  Therefore
$\int_MHe^{w_m}\dd V_g\to\int_MHe^w\dd V_g$.
Since the energies are bounded above and the Dirichlet term is nonnegative,
these integrals are bounded away from zero.  Hence $w\in\mathcal X_H$,
and weak lower semicontinuity proves that $w$ is a minimizer.  Its
Euler--Lagrange equation is \eqref{eq:regularized-scalar}.  Applying
\eqref{eq:normalization-shift} and setting
\begin{equation}\label{eq:recover-u}
 u=v-4\pi\sum_{\ell=1}^L\alpha_\ell G(\cdot,p_\ell)
\end{equation}
gives a solution of \eqref{eq:scalar-original}, with
$\int_Mhe^u\dd V_g=1$.

Suppose next that $\lambda>8\pi$ and
$\lambda\notin\Lambda_h$.  Since every positive integer multiple of $8\pi$
belongs to \eqref{eq:Lambda}, there is an integer $k\geq1$ such that
\eqref{eq:supercritical-interval} holds.  By Step~1, $\Gamma^+$ has a
non-simply-connected component.  The existence theorem of De
Marchis--L\'opez-Soriano--Ruiz \cite[Theorem~2.2]{DMLR} therefore produces a
critical point of \eqref{eq:scalar-functional} when the singular points avoid
$\Gamma$, after deleting any zero-strength marked points.
For positive singular sources on $\Gamma$, the nodal a priori estimate
of \cite{Wu} supplies the extension of the compactness input of
\cite[Theorem~2.1]{DMLR}.  The same scalar variational construction then
applies under \eqref{eq:source-hyp}, as explained in \cite{Wu}.
Normalize the resulting critical point by \eqref{eq:normalization-shift}
and recover $u$ using \eqref{eq:recover-u}.  Thus
\eqref{eq:scalar-original} has a solution for every $\lambda$ satisfying
\eqref{eq:lambda-noncritical}.

For completeness, the role of the exclusion
$\lambda\notin\Lambda_h$ is the following.  Compactness for the approximating
Euler--Lagrange equations supplies the deformation lemma used in the scalar
min--max construction.  If a normalized sequence of scalar solutions blows
up with parameters converging to $\lambda_*>0$, quantization forces
$\lambda_*\in\Lambda_h$; nodal and negative-region singular points do not
contribute.  This is a necessary condition for blow-up, not an assertion that
every element of $\Lambda_h$ is realized by a blowing-up sequence.
%No coefficient homotopy is involved.

\smallskip
\noindent
\emph{Step 3: lifting the scalar solution.}
Let $u$ be a solution of \eqref{eq:scalar-original}.  From
\eqref{eq:q-def}--\eqref{eq:rho-def},
\begin{equation}\label{eq:Arho}
 A\rho=\lambda A q=\lambda\one,
\end{equation}
and hence
\begin{equation}\label{eq:row-identity}
 \sum_{j=1}^na_{ij}\rho_j=\lambda,
 \qquad i=1,\ldots,n.
\end{equation}
Set $u_i=u$ for every $i$.  Then
\[
 \frac{he^{u_j}}{\int_Mhe^{u_j}\dd V_g}
 =
 \frac{he^u}{\int_Mhe^u\dd V_g}
 \qquad\text{for every }j.
\]
Using \eqref{eq:row-identity}, the left-hand side of the $i$th equation of
\eqref{eq:system-intro} becomes
\begin{align*}
 \Delta_g u_i+
 \sum_{j=1}^na_{ij}\rho_j
 \left(
   \frac{he^{u_j}}{\int_Mhe^{u_j}\dd V_g}-1
 \right)
 &=\Delta_g u+\lambda
 \left(
   \frac{he^u}{\int_Mhe^u\dd V_g}-1
 \right)\\
 &=\sum_{\ell=1}^L4\pi\alpha_\ell(\delta_{p_\ell}-1).
\end{align*}
Thus $(u,\ldots,u)$ solves the full system.  Condition
\eqref{eq:positive-denominator} follows from the scalar construction.

Finally, if $(u,\ldots,u)$ and $(\widetilde u,\ldots,\widetilde u)$
are equivalent modulo independent componentwise constants, then
$u-\widetilde u$ is constant.  Hence the diagonal map is injective on
solution classes.  Equivalently, use the representatives satisfying
$\int_Mhe^u\dd V_g=1$.  Any applicable scalar multiplicity lower bound
therefore transfers unchanged.  This completes the proof.
\end{proof}

\section{Parameter geometry and scope}\label{sec:scope}

The synchronized ray has a simple algebraic description in parameter space.
In this section we retain assumptions \ref{A1}--\ref{A3}.  Define
\begin{equation}\label{eq:ratio}
 \mathscr R_A(\rho)
 =\frac{\rho^TA\rho}{\sum_{i=1}^n\rho_i}.
\end{equation}
If $\rho=\lambda q$, then by \eqref{eq:Arho},
\begin{equation}\label{eq:ratio-ray}
 \mathscr R_A(\lambda q)=\lambda.
\end{equation}
Thus the synchronized ray meets the level hypersurfaces associated with
the scalar exceptional values,
\[
 \rho^TA\rho=\sigma\sum_{i=1}^n\rho_i,
 \qquad \sigma\in\Lambda_h,
\]
at $\rho=\sigma q$.  This identity does not identify these hypersurfaces
as the compactness thresholds of the full system.  Such an identification
would require an independent analysis of nonsynchronized blow-up.

The regions between consecutive level sets of $\mathscr R_A$ are connected.
Indeed, writing $\rho=r\theta$ with $\theta_i>0$ and
$\sum_i\theta_i=1$ gives
\[
 \mathscr R_A(r\theta)=r\,\theta^TA\theta.
\]
By \ref{A1}, $\theta^TA\theta>0$ on the open positive simplex.
For any $0\leq a<b$, the map
$(\theta,t)\mapsto t\theta/(\theta^TA\theta)$ is a homeomorphism from
that simplex times $(a,b)$ onto
$\{\rho\in(0,\infty)^n:a<\mathscr R_A(\rho)<b\}$.
This proves the asserted connectedness.
This observation suggests a possible extension of Theorem~\ref{thm:main}: if
one could compute a nonzero full system degree at a synchronized parameter and
prove compactness while varying $\rho$ inside one such region, then existence
would propagate throughout that connected region.  Neither compactness
nor a degree calculation for the full system follows from scalar existence.

The present argument does not supply that degree computation.  The system
functional on the product mean-zero domain $\mathcal X_H^n$ is
\begin{equation}\label{eq:system-functional}
 \mathcal J_\rho(\boldsymbol v)
 =\frac12\sum_{i,j=1}^na^{ij}
   \int_M\nabla_gv_i\cdot\nabla_gv_j\dd V_g
  -\sum_{i=1}^n\rho_i\log\int_MHe^{v_i}\dd V_g.
\end{equation}
Under \ref{A1}--\ref{A3}, the quadratic form of $A^{-1}$ is indefinite:
its trace is nonpositive by \ref{A2}, whereas
$\one^TA^{-1}\one=\sum_iq_i>0$ by \ref{A3}, so it has both a positive
and a negative eigenvalue.  Consequently, low values of
\eqref{eq:system-functional} may be produced by transverse oscillatory
directions rather than by concentration of positive measures, so the scalar
barycenter topology cannot simply be copied.  The diagonal restriction is
precisely what eliminates these transverse directions and leads to the exact
scalar reduction used above.  More explicitly,
\[
 \mathcal J_{\lambda q}(w\one)
 =\left(\sum_iq_i\right)\mathcal J_\lambda(w),
 \qquad w\in\mathcal X_H,
\]
and the passage from a scalar critical point to a full system solution is
justified by the componentwise identity \eqref{eq:row-identity}.

Theorem~\ref{thm:main} should therefore be understood as an intrinsic
existence theorem for a matrix-determined family of genuinely coupled
equations.  It provides solutions without crossing a degenerate coefficient
and identifies full-system compactness and a nonzero degree as additional
requirements for the proposed continuation to general parameter vectors.

\end{document}